\theoremstyle{plain}
\newtheorem{thm}{Theorem}[section]
\newtheorem{cor}[thm]{Corollary}
\newtheorem{lem}[thm]{Lemma}
\newtheorem{rem}[thm]{Remark}
\newtheorem{ques}[thm]{Question}
\newtheorem{exam}[thm]{Example}
\def\cal{\mathcal}
\def\bbb{\mathbb}
\def\op{\operatorname}
\renewcommand{\phi}{\varphi}
\newcommand{\R}{\bbb{R}}
\newcommand{\N}{\bbb{N}}
\newcommand{\Z}{\bbb{Z}}
\newcommand{\Q}{\bbb{Q}}
\begin{document}

\title[Diophantine systems involving symmetric polynomials]{A note on Diophantine systems involving three symmetric polynomials}
\author{Maciej Ulas}

\keywords{symmetric polynomials, elliptic curves} \subjclass[2000]{11G05}
\thanks{Research of the author was supported by Polish Government funds for science, grant IP 2011 057671 for the years 2012--2013.}

\begin{abstract}
Let $\overline{X}_{n}=(x_{1},\ldots,x_{n})$ and  $\sigma_{i}(\overline{X}_{n})=\sum x_{k_{1}}\ldots x_{k_{i}}$ be $i$-th elementary symmetric polynomial. In this note we prove that there are infinitely many triples of integers $a, b, c$ such that for each $1\leq i\leq n$ the system of Diophantine equations
\begin{equation*}
  \sigma_{i}(\overline{X}_{2n})=a, \quad \sigma_{2n-i}(\overline{X}_{2n})=b, \quad \sigma_{2n}(\overline{X}_{2n})=c
\end{equation*}
has infinitely many rational solutions. This result extend the recent results of Zhang and Cai, and the author. Moreover, we also consider some Diophantine systems involving sums of powers. In particular, we prove that for each $k$ there are at least $k$ $n$-tuples of integers with the same sum of $i$-th powers for $i=1,2,3$. Similar result is proved for $i=1,2,4$ and $i=-1,1,2$.
\end{abstract}

\maketitle

\section{Introduction}\label{sec1}

Let $\overline{X}_{n}=(x_{1},\ldots,x_{n})$ and let $\sigma_{i}(\overline{X}_{n})=\sum x_{k_{1}}\ldots x_{k_{i}}$ be the $i$-th elementary symmetric polynomial. In a recent paper \cite{Ul} we generalized the results of Zhang and Cai \cite{ZC,ZC2} and that of Schinzel \cite{Sch} by proving that for all $n\geq 4$ and each choice of $i,j\in\{1,\ldots,n\}$ with $i<j$ there are infinitely many rational numbers $a, b$ such that the system of the Diophantine equations
\begin{equation*}
\sigma_{i}(\overline{X}_{n})=a, \quad \sigma_{j}(\overline{X}_{n})=b,
\end{equation*}
has infinitely many solutions in integers. In fact, in the case of $j=n$ we proved that for all $i\in\{1,\ldots,n-1\}$ and each $a, b\in\Q\setminus\{0\}$ the above system has infinitely many solutions in rational numbers. With this paper  we have also started to study the systems of Diophantine equations involving three different elementary symmetric polynomials by proving that for each $n\geq 5$ there are infinitely many triples of rational numbers $a, b, c$ such that the system
\begin{equation*}
\sigma_{1}(\overline{X}_{n})=a, \quad \sigma_{2}(\overline{X}_{n})=b, \quad \sigma_{3}(\overline{X}_{n})=c
\end{equation*}
has infinitely many solutions in rational numbers $x_{i}, i=1,2,\ldots,n$. As a corollary we get that for each $k$, there are at least $k$ $n$-tuples of integers with the same sum, the same value of the second elementary symmetric polynomials and the same value of the third elementary symmetric polynomial. This result motivated us to raise the following question:

\begin{ques}[Question 5.6 in \cite{Ul}]\label{ques}
Let $n\geq 5$ and $1\leq i_{1}<i_{2}<i_{3}\leq n$ be given. Is it possible to find rational numbers $a, b, c$ such that the system
\begin{equation*}
\sigma_{i_{1}}(\overline{X}_{n})=a,\quad \sigma_{i_{2}}(\overline{X}_{n})=b,\quad \sigma_{i_{3}}(\overline{X}_{n})=c
\end{equation*}
has infinitely many rational solutions? (In the case of $i_{3}=n$ we assume that $c\neq 0$.)
\end{ques}

In this paper we continue this line of research and show that the Question \ref{ques} has positive answer for the triples of the form $(i_{1},i_{2},i_{3})=(i,2n-i,2n)$ with given $i$ satisfying the condition $1\leq i<n$. This is the first case where the problem is considered for three  equations where the indices of the elementary symmetric polynomials are not fixed. This result is obtained with the help of an interesting identity involving symmetric polynomials (Lemma \ref{simplelem}). This allows us to reduce the system of three equations to one more convenient equation that can be more easily handled. The main result of this paper (together with Lemma \ref{simplelem}) is proved in Section \ref{sec2}. In the last section we prove that for each $k\in\N$ there are at least $k$ $n$-tuples of integers with the same sum of $i$-th powers for $i=1,2,3$. Similar result is proved for exponents $i=1,2,4$ and $i=-1,1,2$. These results motivated us to state a general question concerning the existence of rational solutions of systems of Diophantine equations involving sums of powers.

\begin{rem}
{\rm
In the paper we use the standard convention that $\sigma_{i}(\overline{X}_{n})$ is equal to 1 for $i=0$ and $0$ for $i<0$. All computations in this paper were performed with the assistance of {\sc Mathematica 7} \cite{Wol}.}
\end{rem}

\section{A simple lemma and the main result}\label{sec2}

We start with a simple identity involving symmetric functions which will be useful in the proof of our main result.

\begin{lem}\label{simplelem}
Let $i, n$ be non-negative integers and suppose that $i\leq n$. Then we have
\begin{equation*}
\sigma_{i}\Big(\overline{X}_{n},\frac{1}{\overline{X}_{n}}\Big)= \sigma_{2n-i}\Big(\overline{X}_{n},\frac{1}{\overline{X}_{n}}\Big),
\end{equation*}
where $\frac{1}{\overline{X}_{n}}=\Big(\frac{1}{x_{1}},\ldots,\frac{1}{x_{n}}\Big)$.
\end{lem}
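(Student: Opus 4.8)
The plan is to deduce the identity from the fundamental generating function
$\prod_{j=1}^{2n}(1+y_{j}t)=\sum_{k=0}^{2n}\sigma_{k}(\overline{Y}_{2n})t^{k}$, exploiting the fact that the $2n$-element system $\overline{Y}_{2n}=\big(\overline{X}_{n},\frac{1}{\overline{X}_{n}}\big)$ is stable under the reciprocal map $y\mapsto 1/y$. Concretely, I would set
\[
P(t)=\prod_{j=1}^{n}(1+x_{j}t)\Big(1+\tfrac{1}{x_{j}}t\Big)=\sum_{k=0}^{2n}\sigma_{k}\Big(\overline{X}_{n},\tfrac{1}{\overline{X}_{n}}\Big)t^{k},
\]
the second equality being precisely the relation between elementary symmetric polynomials and this product.

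The key step is to compare $P(t)$ with its ``reversed'' polynomial $t^{2n}P(1/t)$. Writing $t+x_{j}=x_{j}\big(1+\tfrac{1}{x_{j}}t\big)$ and $t+\tfrac{1}{x_{j}}=\tfrac{1}{x_{j}}\big(1+x_{j}t\big)$, one gets
\[
t^{2n}P(1/t)=\prod_{j=1}^{n}(t+x_{j})\Big(t+\tfrac{1}{x_{j}}\Big)=\prod_{j=1}^{n}x_{j}\cdot\tfrac{1}{x_{j}}\cdot(1+x_{j}t)\Big(1+\tfrac{1}{x_{j}}t\Big)=P(t),
\]
where the only thing being used is $\prod_{j=1}^{n}x_{j}\cdot\frac{1}{x_{j}}=1$, i.e.\ $\sigma_{2n}\big(\overline{X}_{n},\frac{1}{\overline{X}_{n}}\big)=1$. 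Now expanding both sides in powers of $t$, the left-hand side is $\sum_{k}\sigma_{k}t^{2n-k}=\sum_{k}\sigma_{2n-k}t^{k}$ while the right-hand side is $\sum_{k}\sigma_{k}t^{k}$, so equating the coefficient of $t^{i}$ yields $\sigma_{i}\big(\overline{X}_{n},\frac{1}{\overline{X}_{n}}\big)=\sigma_{2n-i}\big(\overline{X}_{n},\frac{1}{\overline{X}_{n}}\big)$, which is the claim (the hypothesis $i\le n$ is not even needed, but makes the statement nonredundant).

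There is no real obstacle here; this is genuinely a simple lemma, and the only point deserving a moment's care is the justification of $t^{2n}P(1/t)=P(t)$, which rests entirely on the product of all $2n$ entries being $1$. If one prefers an elementary, coefficient-by-coefficient argument avoiding generating functions, I would instead note that monomials of $\sigma_{i}\big(\overline{X}_{n},\frac{1}{\overline{X}_{n}}\big)$ are indexed by pairs of subsets $(A,B)$ of $\{1,\dots,n\}$ with $|A|+|B|=i$ (we pick $x_{j}$ for $j\in A$ and $\frac{1}{x_{j}}$ for $j\in B$), the corresponding monomial being $\prod_{j}x_{j}^{[j\in A]-[j\in B]}$; the involution $(A,B)\mapsto\big(\{1,\dots,n\}\setminus B,\ \{1,\dots,n\}\setminus A\big)$ sends it to a pair of total size $2n-i$ with the \emph{same} monomial, establishing the identity term by term. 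Either version is short; I would present the generating-function computation as the main argument.
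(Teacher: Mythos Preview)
Your proof is correct. The paper actually takes the combinatorial route you sketch as an alternative: it matches each monomial $\prod_{j\in I}x_j\prod_{j\in J}x_j^{-1}$ of $\sigma_i$ with a monomial of $\sigma_{2n-i}$ by passing to complementary index sets, using $\prod_{j}x_j\cdot x_j^{-1}=1$ to see that the two monomials coincide. (Your form of the bijection, $(A,B)\mapsto(\{1,\dots,n\}\setminus B,\;\{1,\dots,n\}\setminus A)$, is in fact the right one; the paper writes $(I,J)\mapsto(I',J')$ with $I'=\{1,\dots,n\}\setminus I$ and $J'=\{1,\dots,n\}\setminus J$, which as written yields reciprocal rather than equal monomials---a harmless slip in an otherwise sound argument.)

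Your primary argument via the generating polynomial $P(t)$ is a genuinely different packaging: instead of a term-by-term bijection you establish the functional equation $t^{2n}P(1/t)=P(t)$ and read off all the identities $\sigma_i=\sigma_{2n-i}$ simultaneously from the palindromic symmetry of the coefficient sequence. This is arguably cleaner and makes the full range $0\le i\le 2n$ fall out at once; the paper's bijection, by contrast, stays closer to the raw definition of $\sigma_i$ and avoids introducing any auxiliary object. Both proofs ultimately rest on the single fact $\sigma_{2n}\big(\overline{X}_n,\tfrac{1}{\overline{X}_n}\big)=1$.
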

\begin{proof}
We consider an arbitrary summand $s_{I,J}$ of the rational function $\sigma_{i}\Big(\overline{X}_{n},\frac{1}{\overline{X}_{n}}\Big)$, and write
\begin{equation*}
s_{I,J}=\prod_{j\in I}x_{j}\prod_{j\in J}\frac{1}{x_{j}},
\end{equation*}
where $(I, J)\in\cal{A}_{i}$ and the set $\cal{A}_{i}$ contains all pairs $(I,J)$ of sets satisfying the conditions: $I, J\subset \{1,2,\ldots,n\}$ with $|I|+|J|=i$. For $(I,J)\in\cal{A}_{i}$ we define $I'=\{1,\ldots,n\}\setminus I$ and similarly the set $J'$. Note that $|I'|+|J'|=2n-i$. We also define the set $\cal{A}_{i}'$ which contains all corresponding pairs $(I',J')$, for $(I,J)\in\cal{A}_{i}$.  From the obvious equality
\begin{equation*}
\prod_{j=1}^{n}x_{j}\prod_{j=1}^{n}\frac{1}{x_{j}}=1,
\end{equation*}
we immediately deduce the following identity
\begin{equation*}
s_{I,J}=\prod_{j\in I}x_{j}\prod_{j\in J}\frac{1}{x_{j}}=\prod_{j\in I'}x_{j}\prod_{j\in J'}\frac{1}{x_{j}}=:s_{I',J'}.
\end{equation*}
However, the expression $s_{I',J'}$ is a summand in $\sigma_{2n-i}\Big(\overline{X}_{n},\frac{1}{\overline{X}_{n}}\Big)$, and the above identity shows that the correspondence $s_{I,J}\rightarrow s_{I',J'}$ is a bijection. We thus get the equality  \begin{equation*}
\sigma_{i}\Big(\overline{X}_{n},\frac{1}{\overline{X}_{n}}\Big)=\sum_{(I,J)\in \cal{A}_{i}}s_{I,J}=\sum_{(I',J')\in\cal{A}_{i}'}s_{I',J'}=
\sigma_{2n-i}\Big(\overline{X}_{n},\frac{1}{\overline{X}_{n}}\Big)
\end{equation*}
and our result follows.
\end{proof}

Now we are ready to prove the main result of this paper.

\begin{thm}\label{mainthm}
 Let $i, n$ be a positive integers with $i\leq n$ and $n\geq 2$. Let $t_{1},\ldots, t_{n-2}$ be rational parameters. Then the set of those $a\in\Q(t_{1},\ldots,t_{n-2})\setminus\{0\}$ such that the system of Diophantine equations
\begin{equation}\label{mainsys}
  \sigma_{i}(\overline{X}_{2n})=a, \quad \sigma_{2n-i}(\overline{X}_{2n})=a, \quad \sigma_{2n}(\overline{X}_{2n})=1
\end{equation}
has infinitely many solutions in $\Q(t_{1},\ldots,t_{n-2})$, is infinite.
\end{thm}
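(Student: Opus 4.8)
The plan is to exploit Lemma~\ref{simplelem} to collapse the first two equations into one. The key observation is that if we specialize the $2n$ variables of $\overline{X}_{2n}$ to the tuple $\bigl(\overline{X}_{n},\tfrac{1}{\overline{X}_{n}}\bigr)=\bigl(x_{1},\ldots,x_{n},\tfrac{1}{x_{1}},\ldots,\tfrac{1}{x_{n}}\bigr)$, then two of the three equations in \eqref{mainsys} become automatically compatible: the last equation $\sigma_{2n}=1$ holds identically (the product of all $2n$ coordinates is $x_{1}\cdots x_{n}\cdot\tfrac{1}{x_{1}}\cdots\tfrac{1}{x_{n}}=1$), and by Lemma~\ref{simplelem} we have $\sigma_{i}\bigl(\overline{X}_{n},\tfrac{1}{\overline{X}_{n}}\bigr)=\sigma_{2n-i}\bigl(\overline{X}_{n},\tfrac{1}{\overline{X}_{n}}\bigr)$ identically, so the first equation automatically forces the second. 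Hence after this substitution the whole system \eqref{mainsys} is equivalent to the single equation
\begin{equation*}
\sigma_{i}\Bigl(\overline{X}_{n},\frac{1}{\overline{X}_{n}}\Bigr)=a.
\end{equation*}
It then suffices to show that for infinitely many $a\in\Q(t_{1},\ldots,t_{n-2})\setminus\{0\}$ this one equation has infinitely many solutions $(x_{1},\ldots,x_{n})\in\Q(t_{1},\ldots,t_{n-2})^{n}$ with all $x_{j}\neq 0$.

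The second step is to introduce just enough free parameters to make the remaining single equation trivially solvable. I would set $x_{1}=t_{1},\ldots,x_{n-2}=t_{n-2}$, leaving $x_{n-1},x_{n}$ as the unknowns to be solved for, and introduce one more free rational parameter, say $x_{n-1}=s$ (or reuse one of the $t_{j}$'s), so that $\sigma_{i}\bigl(\overline{X}_{n},\tfrac{1}{\overline{X}_{n}}\bigr)$, viewed as a function of the last unknown $x_{n}=:x$, becomes a rational function in $x$ over the field $\Q(t_{1},\ldots,t_{n-2},s)$. Expanding $\sigma_{i}\bigl(\overline{X}_{n},\tfrac{1}{\overline{X}_{n}}\bigr)$ in the single variable $x$ using the standard recursion $\sigma_{i}(Y,y)=\sigma_{i}(Y)+y\,\sigma_{i-1}(Y)$ applied to the two new coordinates $x$ and $1/x$, one sees it has the shape $P(x)/x$ for some polynomial $P\in\Q(t_{1},\ldots,t_{n-2},s)[x]$ of low degree (degree $2$ after clearing, symmetric in $x\leftrightarrow 1/x$ up to the factor $x$). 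So the equation $\sigma_{i}(\ldots)=a$ becomes $P(x)-a x=0$, a quadratic in $x$; choosing $a$ to be (a value making) the constant term vanish, or more cleanly just picking a rational value $x=x_{0}$ and \emph{defining} $a:=\sigma_{i}\bigl(t_{1},\ldots,t_{n-2},s,x_{0},\tfrac{1}{t_{1}},\ldots,\tfrac{1}{t_{n-2}},\tfrac{1}{s},\tfrac{1}{x_{0}}\bigr)$, immediately produces a solution. To get \emph{infinitely many} values of $a$ and, for each, infinitely many solutions, I would let $x_{0}$ and $s$ range: for each fixed rational $x_{0}$ the above defines one $a(x_{0},s)\in\Q(t_{1},\ldots,t_{n-2})$ (after specializing $s$ too, or keeping $s$ as a parameter absorbed into the ground field) and the quadratic $P(x)-a x=0$ has $x=x_{0}$ as a root, hence a second rational root by Vieta; letting $s$ vary over $\Q$ (or over a one-parameter family) gives infinitely many solutions for that $a$, and letting $x_{0}$ vary gives infinitely many distinct $a$.

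The point requiring care is ensuring that these constructions genuinely yield \emph{infinitely many distinct} values of $a$ and, for each, \emph{infinitely many distinct} solution tuples, rather than the same few repeated. Concretely one must check that $a$, as a function of the varying parameter, is non-constant (a short degree/leading-coefficient computation in the explicit quadratic $P$), and that the resulting $x$-values are pairwise distinct and all nonzero (so that $1/x_{j}$ make sense and $\sigma_{2n}=1$ stays valid). I expect this bookkeeping — together with verifying that the explicit quadratic $P(x)-ax$ does not degenerate (leading coefficient not identically zero) for the chosen specialization — to be the only real obstacle; everything else is forced by Lemma~\ref{simplelem} and the substitution $\overline{X}_{2n}\mapsto\bigl(\overline{X}_{n},\tfrac{1}{\overline{X}_{n}}\bigr)$. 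The hypotheses $i\leq n$ and $n\geq 2$ are exactly what is needed for Lemma~\ref{simplelem} to apply and for there to be at least the variables $x_{n-1},x_{n}$ plus the parameters available to run the argument.
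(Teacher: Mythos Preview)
Your reduction via Lemma~\ref{simplelem} to the single equation $\sigma_{i}\bigl(\overline{X}_{n},\tfrac{1}{\overline{X}_{n}}\bigr)=a$, together with the specialization $x_{1}=t_{1},\ldots,x_{n-2}=t_{n-2}$, is exactly how the paper begins, and it is correct. The gap is in the second half. After fixing the $t_{j}$'s and a value of $a$, the remaining equation in the two unknowns $(s,x)=(x_{n-1},x_{n})$ is not a one-variable problem that can be solved by letting $s$ range freely: clearing the denominator $sx$ gives a bidegree-$(2,2)$ equation which, for generic $a$, cuts out a curve of genus~$1$ in the $(s,x)$-plane. Your step ``let $s$ vary over $\Q$ to get infinitely many solutions for that $a$'' does not work, because for each $s$ the resulting quadratic in $x$ has discriminant a quartic polynomial in $s$, and that quartic is a square only for special $s$. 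The ``second root by Vieta'' only produces $x\mapsto 1/x$, which yields a permutation of the same $2n$-tuple, not a new solution. And if instead you let both $s$ and $x_{0}$ vary, you do get a two-parameter family of solutions, but $a$ moves with them; you have not shown that any single fibre $\{a=\text{const}\}$ contains infinitely many rational points.

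This is exactly where the paper does the real work. It chooses $a$ to be the value at a point $(P,Q)=(p,q)$, so that the curve in $(P,Q)$ carries the two known rational points $(p,q)$ and $(1/p,1/q)$; it then writes the discriminant condition as a hyperelliptic quartic $S^{2}=H(P)$, transforms it birationally to a Weierstrass elliptic curve over $\Q(p,q,\overline{T}_{n-2})$, and checks via a Nagell--Lutz type argument that the image of one of those known points has infinite order. That elliptic-curve step is what your sketch is missing; the ``bookkeeping'' you anticipate is not the obstacle --- the obstacle is that the level curves are genus~$1$, and producing infinitely many rational points on them requires exactly this kind of argument.
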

\begin{proof}
The proof of our result is rather technical at certain points and some symbolic computations are omitted. The reason is simple: we want to get the result which is as general as possible. More precisely, we are interesting in constructing solutions which depend on $n-2$ rational parameters. In particular, this allow us to get an interesting result given in the Corollary \ref{cor} which has strong Diophantine flavor. Unfortunately, the degree of generality implies that the numerical data practically cannot be given explicitly. In such situations we only present a principal argument, omitting all the tiresome details. However, later on we work out an example explicitly, where the reader can check all details (see Example \ref{exam1}).

Let us fix $i\leq n$. In order to find $a$'s which satisfy the required property we put $x_{n+j}=\frac{1}{x_{j}}$ for $j=1,\ldots,n$ and thus by Lemma \ref{simplelem} we have the equality $\sigma_{i}(\overline{X}_{2n})=\sigma_{2n-i}(\overline{X}_{2n})$. Moreover, the third equation from the system (\ref{mainsys}) is satisfied too. Therefore, we need to solve only the one equation
\begin{equation*}
  \sigma_{i}(\overline{X}_{2n})=\sigma_{i}\Big(\overline{X}_{n},\frac{1}{\overline{X}_{n}}\Big)=a
\end{equation*}
which can be rewritten as
\begin{equation*}
 \sigma_{i}\Big(\overline{X}_{n-2},\frac{1}{\overline{X}_{n-2}},P,Q,\frac{1}{P},\frac{1}{Q}\Big)=a,
\end{equation*}
where in order to shorten the notation we put $x_{n-1}=P, x_{n}=Q$.
Now let us take
\begin{equation*}
a=\sigma_{i}\Big(\overline{T}_{n-2},\frac{1}{\overline{T}_{n-2}},p,q,\frac{1}{p},\frac{1}{q}\Big),
\end{equation*}
where $\overline{T}_{n-2}=(t_{1},\ldots,t_{n-2})$ is a vector of rational parameters and $p, q$ are rational parameters too. Now, let us put $\overline{X}_{n-2}=\overline{T}_{n-2}$ and note that the equation
\begin{equation*}
\sigma_{i}\Big(\overline{T}_{n-2},\frac{1}{\overline{T}_{n-2}},P,Q,\frac{1}{P},\frac{1}{Q}\Big)=\sigma_{i}\Big(\overline{T}_{n-2},\frac{1}{\overline{T}_{n-2}},p,q,\frac{1}{p},\frac{1}{q}\Big)
\end{equation*}
defines a curve over the rational function field $K:=\Q(p,q,\overline{T}_{n-2})$ with two known $K$-rational points $(P,Q)=(p^{\epsilon},q^{\epsilon})$, where $\epsilon\in\{-1,1\}$.


We immediately get that our equation is equivalent to
\begin{equation}\label{eq2}
\sum_{k=0}^{4}u_{k}\sigma_{k}\Big(P,Q,\frac{1}{P},\frac{1}{Q}\Big)=\sum_{k=0}^{4}u_{k}\sigma_{k}\Big(p,q,\frac{1}{p},\frac{1}{q}\Big),
\end{equation}
where in order to shorten the notation we put (remember we fixed $i$)
\begin{equation*}
u_{k}=\sigma_{i-k}\Big(\overline{T}_{n-2},\frac{1}{\overline{T}_{n-2}}\Big).
\end{equation*}
Indeed, the above equivalence follows from the general identity
\begin{equation*}
\sigma_{i}(\overline{X}_{n+j})=\sum_{k=0}^{i}\sigma_{i-k}(\overline{X}_{n})\sigma_{k}(x_{n+1},\ldots,x_{n+j}),
\end{equation*}
where $i, j, n$ are non-negative integers with $i\leq n+j$. The right hand side is just a simple expansion of $\sigma_{i}(\overline{X}_{n+j})$ with respect to the variables $x_{n+1},\ldots,x_{n+j}$.

We return to the equation (\ref{eq2}). If $k=0$ then the initial terms of sums on both sides of our equation are equal so $u_{0}$ will be absent in our equation. Moreover, let us note that if $i-k<0$ then $u_{k}=0$, and if $i-k=0$ then $u_{k}=1$. In particular, if $n>i\geq 4$ then $u_{k}\neq 0$ for $k=0,\ldots,4$.
The equation (\ref{eq2}), after clearing the common denominator $pqPQ$, can be written as
$F_{P}(Q):=a_{0}Q^2+a_{1}Q+a_{2}=0$, where
\begin{align*}
 a_{0}&=p q (u_2P^2+(u_1+u_3)P+u_2),\\
 a_{1}&=pq(u_{1} + u_{3})(P^2+1)-P((p^2q^2+p^2+q^2+1)u_2+(u_1+u_3)(p+q)(pq+1)),\\
 a_{2}&=a_{0}.
\end{align*}

We thus see that in order to prove our result it is enough to show that the set of those $P\in K$ for which the equation $F_{P}(Q)=0$ has a $K$-rational solution (with respect to $Q$) is infinite. Equivalently, we need to show that there are infinitely many $P\in K$ such that the discriminant $\Delta(P)$ of the polynomial $F_{P}$ is a square in $K$. We thus consider the hyperelliptic quartic curve in the $(P,S)$ plane given by the equation
\begin{equation*}
\cal{C}:\;S^2=a_{1}(P)^2-4a_{0}(P)^2=:H(P).
\end{equation*}
Here we put $S=2a_{0}(P)Q+a_{1}(P)$. We see that $\cal{C}$ is defined over the field $K$.
In general we have $\op{deg}_{P}H=4$. However, we note that if $(u_1-2 u_2+u_3)(u_1+2 u_2+u_3)=0$ then $\op{deg}_{P}H\leq 3$. One can check that the discriminant $\op{Disc}_{P}(H)$ of the right-hand side of the polynomial defining the curve $\cal{C}$ is a non-zero element of the field $K$.
This implies that $\cal{C}$ is of genus $\leq  1$. Moreover, our construction of the expression for $a$ was performed in such a way that the curve $\cal{C}$ contains two non-trivial $K$-rational points
\begin{equation*}
U=(p,2a_{0}(p)q+a_{1}(p)),\quad\quad  V=\Big(\frac{1}{p},2a_{0}\Big(\frac{1}{p}\Big)\frac{1}{q}+a_{1}\Big(\frac{1}{p}\Big)\Big).
\end{equation*}

Let us note that it is possible to specialize parameters $p, q, t_{i}$ for $i=1,2,\ldots,n-2$, i.e. simple take $p, q, t_{i}$ as concrete rational numbers, in such a way that the genus of the specialized curve, say $\cal{C}'$, is 0. This is equivalent with finding rational solutions of the equation $\op{Disc}_{P}(H)=0$. After such specialization, the curve $\cal{C}'$ will be defined over the field $\Q$ and can be parameterized with the standard method of projection from $\Q$-rational point $U'$
(which is obtained by the specialization of the point $U$). However, in order to have $\op{Disc}_{P}(H)=0$ it is necessary to solve the equation which seems to be more difficult then the equation we are working on. Moreover, there is one more weakness of this approach: in this case we were able to construct the solutions of (\ref{mainsys}) which depend on one rational parameter only.

In the light of the remark above we need to concentrate on the case when $\cal{C}$ is of genus one. Then we can use the point $U$ as a point at infinity in order to find a birational map $\phi:\; \cal{C}\rightarrow \cal{E}$. Here $\cal{E}$ is given by the Weierstrass equation of the form $\cal{E}:\;Y^2=X^3+AX+B$ (see \cite[p. 77]{Mor}). We note that $A$ and $B$ are very complicated polynomials depending on variables $t_{1},\ldots, t_{n-2}, p$ and  $q$. Moreover, there is no $f\in\Z[\overline{T}_{n-2},p,q]$ such that $f^4|A$ and $f^6|B$. In order to finish the proof it is enough to prove that the point $W=(X,Y)=\phi(V)$ is of infinite order in the group of $K$-rational points lying on the curve $\cal{E}$. This was achieved with the help of a generalized version of the classical Nagell-Lutz theorem (see \cite[p. 78]{Sko} and \cite[p. 268]{Con}). The generalization states that if $E:\;Y^2=X^3+\cal{A}X+\cal{B}$ is an elliptic curve over the field $\Q(t_{1},\ldots,t_{k})$ of rational functions in $k$ variables with $\cal{A}, \cal{B}\in\Z[t_{1},\ldots,t_{k}]$ then the coordinates of torsion points on $E$ have coordinates in $\Z[t_{1},\ldots,t_{k}]$. This generalization can be used to prove that $W$ is of infinite order on $\cal{E}$. Indeed, the brute force calculation with the help of {\sc Mathematica} program \cite{Wol} reveals that both coordinates of $W$ are not polynomials.
This guarantees that $W$ is of infinite order on $\cal{E}(K)$. In particular, for all but finitely many points $W'\in \{[n]W:\;n\in\N\}$ the preimage $\phi^{-1}(W')$ corresponds to a point on the curve $\cal{C}$ and thus to a solution of the equation (\ref{eq2}), and finally to a solution of the system (\ref{mainsys}).

As was mentioned on the beginning of the proof, we do not present the details of these computations because they are long and not enlightening. However, in the Example \ref{exam1} below we present all the necessary computations where the interested reader can check all the details.
\end{proof}

\begin{rem}
{\rm Although we did not present all the messy details of the proof of the theorem above let us describe in a few words how one can construct a new solution of the equation  (\ref{eq2}) (and thus the solution of the system (\ref{mainsys})) using the point $U$ which lies on the curve $\cal{C}$. It is easy to find a parabola $S=\alpha P^2+\beta P+\gamma $ (here $\gamma$ is the $S$-th coordinate of the point $U$) which is tangent to the curve $\cal{C}$ at point $W$ with multiplicity three (or two in the case when $\op{deg}_{P}H=3$ and then we can take $\alpha=0$). In this circumstances the polynomial equation
\begin{equation*}
(\alpha P^2+\beta P+\gamma )^2=a_{1}(P)^2-4a_{0}(P)^2
\end{equation*}
is of degree four (or three in case of $\op{deg}_{P}H=3$) with a triple root at $P=p$ (respectively with a double root at $P=p$). This implies that the fourth (respectively third) root, say $P'$, must be $K$-rational and we get $[2]W=(P',\alpha P'^2+\beta P'+\gamma )$. However, even the expression for $P'$ is very complicated and we do not see any reason to present it explicitly. Let us also note that this method of construction of a new point, lying on hyperelliptic quartic, from an old one  essentially goes back to Euler (see also \cite[p. 69]{Mor}).}
\end{rem}

From the Theorem \ref{mainthm} we immediately deduce the following corollary.

\begin{cor}\label{cor}
Let $i, n$ be positive integers with $i\leq n$ and $n\geq 2$ and let us put $\overline{T}_{n-2}=(t_{1},\ldots,t_{n-2})$. Then for every positive integer $k$, there exist infinitely many primitive sets of $k$ $n$-tuples of polynomials from $\Z[\overline{T}_{n-2}]$ with the same value of $i$-th, $2n-i$-th elementary symmetric polynomial and the same product.
\end{cor}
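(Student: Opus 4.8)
The plan is to turn the rational-function solutions produced by Theorem~\ref{mainthm} into polynomial ones by a single homogenization, and then to separate the resulting configurations as $a$ varies. First I fix $i$, $n$ and $k$, and use Theorem~\ref{mainthm} to pick some $a\in\Q(\overline{T}_{n-2})\setminus\{0\}$ for which the system (\ref{mainsys}) has infinitely many solutions in $\Q(\overline{T}_{n-2})$. Since each unordered $2n$-tuple of coordinates arises from at most $(2n)!$ ordered solutions, infinitely many ordered solutions yield infinitely many that remain pairwise distinct after any permutation of the coordinates; I choose $k$ of these, say $\overline{X}_{2n}^{(1)},\ldots,\overline{X}_{2n}^{(k)}$.

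Next I clear denominators simultaneously. Every coordinate of every $\overline{X}_{2n}^{(j)}$ is a quotient of elements of $\Z[\overline{T}_{n-2}]$, so there is a single $D\in\Z[\overline{T}_{n-2}]\setminus\{0\}$ with $D\,\overline{X}_{2n}^{(j)}\in\Z[\overline{T}_{n-2}]^{2n}$ for all $j$ (take a common denominator and multiply by an integer to clear the coefficient denominators). Since $\sigma_{m}$ is homogeneous of degree $m$, the equations (\ref{mainsys}) give
\begin{equation*}
\sigma_{i}(D\,\overline{X}_{2n}^{(j)})=D^{i}a,\qquad \sigma_{2n-i}(D\,\overline{X}_{2n}^{(j)})=D^{2n-i}a,\qquad \sigma_{2n}(D\,\overline{X}_{2n}^{(j)})=D^{2n},
\end{equation*}
and the crucial point is that the right-hand sides are independent of $j$ precisely because all $k$ tuples were multiplied by the \emph{same} factor $D$. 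Dividing every one of the $2nk$ entries by their greatest common divisor $g\in\Z[\overline{T}_{n-2}]$ (which is a UFD) keeps all entries in $\Z[\overline{T}_{n-2}]$, leaves the three common values equal across $j$, and makes the whole collection primitive. This yields one primitive set of $k$ $2n$-tuples over $\Z[\overline{T}_{n-2}]$ with equal $i$-th and $(2n-i)$-th elementary symmetric polynomial and equal product.

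Finally, to obtain infinitely many such sets I let $a$ run over the infinite set supplied by Theorem~\ref{mainthm}. Writing $\lambda=D/g$ for the total factor by which each $\overline{X}_{2n}^{(j)}$ was scaled, the common value of $\sigma_{2n}$ in the resulting set equals $\lambda^{2n}$, so $\lambda$ is determined by the set up to a $2n$-th root of unity in $\Q(\overline{T}_{n-2})$, and the only such roots of unity are $\pm1$; then $a$ is recovered from the common value of $\sigma_{i}$, which equals $\lambda^{i}a$, up to a sign. Hence at most two values of $a$ can produce the same set, so infinitely many values of $a$ produce infinitely many distinct primitive sets. The argument is essentially formal, with no serious obstacle; the one point to get right is to homogenize and primitivize all $k$ tuples by a common factor, since clearing denominators tuple by tuple would spoil the matching of the homogeneity degrees and hence of the common values.
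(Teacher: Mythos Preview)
Your argument is correct and is essentially the standard homogenization trick that the paper has in mind: the paper omits the proof entirely, saying only that it ``goes exactly as in \cite[Corollary~3.2]{Ul}'', and that earlier argument is precisely the passage from rational-function solutions to primitive polynomial ones by scaling all $k$ tuples by a common denominator and dividing out the overall gcd, followed by varying the free parameter to obtain infinitely many configurations. Your write-up makes the recovery step explicit (reading off $\lambda^{2n}$ from the common product and then $a$ up to sign from the common $\sigma_i$), which is a clean way to certify that distinct $a$'s really give distinct primitive sets; this is the only place one has to be a little careful, and you handled it correctly.

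One cosmetic remark: the printed statement says ``$n$-tuples'', but, as you implicitly noticed, the system (\ref{mainsys}) lives in $2n$ variables, so these are $2n$-tuples; your interpretation is the intended one.
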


Let us recall that a set $S$ of $n$-tuples is called primitive if the greatest common divisor of all elements of all $n$-tuples of $S$ is 1. We omit the proof of this corollary. The reason is simple: the proof goes exactly as in \cite[Corrollary 3.2]{Ul}, where similar result is proved for $n$-tuples of polynomials with the same value of $i$-th symmetric polynomial and the same product.

\begin{rem}
{\rm Let us note that the result contained in the corollary above is much stronger then the statement which says that for each $k\in\N$ there is at least $k$ $n$-tuples of integers with the same value of $i$-th, $2n-i$-th elementary symmetric polynomial and the same product.}
\end{rem}

\begin{exam}\label{exam1}
{\rm In order to present an example of solutions of the system (\ref{mainsys}) explicite let us take $i=1$ and $n=3$. Then $u_{1}=1, u_{2}=u_{3}=u_{4}=0$. Moreover, in order to shorten the presentation let us put $T_{1}=t_{1}=1, p=2$. Following the proof of the Theorem \ref{mainthm} we have $a=\sigma_{1}\Big(1,1,2,q,\frac{1}{2},\frac{1}{q}\Big)=\frac{2 q^2+9 q+2}{2 q}$, where $q$ is a rational parameter. We thus consider the equation
\begin{equation}\label{eq3}
\sigma_{1}\Big(1,1,P,Q,\frac{1}{P},\frac{1}{Q}\Big)=\frac{2 q^2+9 q+2}{2 q}.
\end{equation}
In this case the curve $\cal{C}$ takes the form
\begin{equation*}
\cal{C}:\;S^2=4q^2(P^4+1)-4q(q+2)(2q+1)P(P^2+1)+(4 q^4+20 q^3+25 q^2+20 q+4)P^2.
\end{equation*}
Using the point $(0,2q)$ which lie on $\cal{C}$ as a point at infinity we found the birational model of $\cal{C}$ given by Weierstrass equation $
\cal{E}:\;Y^2=X^3+27AX-54B$, where
\begin{align*}
A&=16 q^8+160 q^7+408 q^6+200 q^5-115 q^4+200 q^3+408 q^2+160 q+16,\\
B&=(4 q^4+20 q^3+q^2+20 q+4)(A-204q^4),
\end{align*}
with discriminant
\begin{equation*}
\Delta(\cal{E})=2^{16}3^{12}q^8 (q+2)^2 (2 q+1)^2(2 q^2-3 q+2)(2 q^2+13 q+2).
\end{equation*}
The curve $\cal{E}$ is defined over the rational function field $\Q(q)$ and contains a $\Q(q)$-rational torsion point of order two:
\begin{equation*}
T=(3(4 q^4+20 q^3+q^2+20 q+4),0).
\end{equation*}
The point $U=(2,4(q^2-1))$ maps through the $\phi$, where $\phi:\;\cal{C}\rightarrow\cal{E}$ is a birational map, to the point
\begin{equation*}
W=\phi(U)=(3(4 q^4+20 q^3+q^2-4 q+4),-216 (q-2) q (2 q+1)).
\end{equation*}
Because the square of the $Y$-th coordinate of the point $W$ do not divide $\Delta(\cal{E})$ we get that the point $W$ is of infinite order in the group $\cal{E}(\Q(q))$. This implies that the set of $\Q(q)$-rational points on $\cal{E}$ is infinite. Without difficulty we can find solutions of the equation (\ref{eq3}) now. For $k=2,3,\ldots $ we compute the point $[k]W=\sum_{i=1}^{k}W$ on the curve $\cal{E}$; next we calculate the corresponding point $\phi^{-1}([k]W)=(P_{k},S_{k})$ on $\cal{C}$. Here $S_{k}=2a_{0}(P_{k})Q+a_{1}(P_{k})$ and thus $Q_{k}:=(S_{k}-a_{1}(P_{k}))/2a_{0}(P_{k})$ solves the equation $F_{P_{k}}(Q)=0$. For example the point $[2]W$ corresponds to the solution of the equation (\ref{eq3}) given by
\begin{equation*}
P=\frac{3 (q-2) q}{2 (q-1) (q+1) (2 q-1)},\quad Q=-\frac{3 q (2 q-1)}{2 (q-2) (q-1) (q+1)}.
\end{equation*}
The point $[3]W$ gives the solution
\begin{align*}
P&=\frac{(4 q^3-2 q^2-7 q+8)(8 q^3-7 q^2-2 q+4)}{2(q^3+4 q^2-4 q+2)(2 q^3-4 q^2+4 q+1)},\\
Q&=\frac{(q^3+4 q^2-4 q+2)(8 q^3-7 q^2-2 q+4)}{q(2q^3-4 q^2+4 q+1)(4 q^3-2 q^2-7 q+8)},
\end{align*}
and so on.

A question arises how big is the set of those $q\in\Q$ such that the curve $\cal{E}_{q_{0}}$ obtained from $\cal{E}$ by the specialization at rational number $q=q_{0}$ has positive rank? The answer follows from Silverman's theorem which states that if $\cal{E}$ is an elliptic curve
defined over $\Q(q)$ with positive rank, then for all but finitely many $q_{0}\in\Q$, the curve $\cal{E}_{q_{0}}$ obtained from the
curve $\cal{E}$ by specialization at $q=q_{0}$ has positive rank \cite[p. 368]{Sil}. From this result we see that for all but finitely many $q_{0}\in\Q$, the
elliptic curve $\cal{E}_{q_{0}}$ is of positive rank. In fact, we checked that if $q_{0}\in\Q\setminus\{-1,1,2,\frac{1}{2}\}$ then the point $W_{q_{0}}$ which is specialization of the point $W$ at $q=q_{0}$ is of infinite order on the curve $\cal{E}_{q_{0}}$. This result was obtained by looking for rational roots of the denominators arising  in the calculation of $[k]W$ for $k=1,2\ldots,12$ (from Mazur's theorem we know that if $W_{q_{0}}$ is of finite order then $k\leq 12$). The interesting thing is that this remark and the shape of $a=a(q)=\frac{2 q^2+9 q+2}{2 q}$ immediately implies that the set of those $a\in\Q$ for which there are infinitely many rational solutions of the equation
\begin{equation*}
\sigma_{1}\Big(1,1,P,Q,\frac{1}{P},\frac{1}{Q}\Big)=\sigma_{5}\Big(1,1,P,Q,\frac{1}{P},\frac{1}{Q}\Big)=a,
\end{equation*}
is dense subset of $\R$. Indeed, we have $\lim_{q\rightarrow \pm \infty}a(q)=\pm \infty$ and the continuity of $a(q)$ implies that $\overline{a(\Q)}=\R$, where the closure is taken in the Euclidean topology.

}
\end{exam}

\section{Diophantine systems involving sums of powers}\label{sec4}

A question arises whether it is possible to get some new results for systems of Diophantine equations involving different types of symmetric polynomials. It seems that from Diophantine point of view the most interesting is a family involving sums of powers. Let $n$ be a positive integer and for $i\in\Z\setminus\{0\}$ let us put
\begin{equation*}
s_{i}(\overline{X}_{n})=\sum_{j=1}^{n}x_{j}^{i}.
\end{equation*}
Note that the exponent in our notation of $s_{i}$ is not necessarily a positive integer. We are thus interested in the Diophantine systems of the form
\begin{equation}\label{generalsystem}
s_{e_{i}}(\overline{X}_{n})=a_{i}\quad\mbox{for}\quad i=1,2,\ldots,m,
\end{equation}
where $e_{1},\ldots,e_{m}$ are given non zero integers and $a_{1},\ldots, a_{m}$ are rational numbers. It is well known that the above system with $m=2, e_{1}=1, e_{2}=2$ and the suitable choice of  $a_{1}, a_{2}\in\Q$ has infinitely many solutions in rational numbers for each $n\geq 3$. Indeed, we can eliminate $x_{n}$ from the first equation and choose $a_{1}, a_{2}$ such that the resulting hypersurface of degree two has a rational point $P$. Using then a projection form the point $P$ we can find parametric solution of our system. Similar argument works in case $e_{1}=1, e_{2}=3$. Then the resulting variety is just a cubic hypersurface.  The known rational point and the chord and tangent procedure allows to produce infinitely many rational points (in case of $n=3$) or parametric solution (in case $n>3$). A bit more of work allows us to cover the case of $e_{1}=1, e_{2}=4$ too (with $n=4$). It would be very interesting to prove that for each pair of the form $(e_{1},e_{2})=(1,k)$, where $k$ is given non-zero integer there is a $n\in\N$ and rational numbers $a_{1}, a_{2}$ such that the system (\ref{generalsys}) (with $m=2$) has infinitely many rational solutions.

In the case of system (\ref{generalsystem}) with three equations and certain triples of exponents we present the following result.

\begin{thm}\label{123thm}
Let $(e_{1},e_{2},e_{3})\in\{(1,2,3),(1,2,4)\}$ and let $n\geq 4$. For each $a\in\Q\setminus\{0\}$ there are non-zero rational numbers $b, c$ such that the system of Diophantine equations
\begin{equation}\label{123sys}
s_{i_{1}}(\overline{X}_{n})=a,\quad s_{i_{2}}(\overline{X}_{n})=b,\quad s_{i_{3}}(\overline{X}_{n})=c
\end{equation}
has infinitely many rational solutions $x_{1},\ldots,x_{n}$. Moreover, if $(e_{1},e_{2},e_{3})=(-1,1,2)$ then for each $a, b\in\Q\setminus\{0\}$ there exists a non-zero rational number $c$ such that the system {\rm (\ref{123sys})} has infinitely many rational solutions.
\end{thm}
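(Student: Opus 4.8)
The plan is to handle the three exponent triples in a uniform way: eliminate the variable $x_n$ (or a pair of variables) using the first equation, substitute into the remaining equations, and arrange the free parameters so that the resulting system has a rational curve of solutions. For $(e_1,e_2,e_3)=(1,2,3)$ with $n\geq 4$, I would first look for solutions in which the multiset $\{x_1,\ldots,x_n\}$ splits into two blocks that are \emph{rational scalings of a fixed configuration}. Concretely, fix a small base case $n=4$ and seek $(x_1,x_2,x_3,x_4)$ depending on one parameter $u$ with $s_1=\text{const}$; the conditions $s_2=b$ and $s_3=c$ then become two polynomial conditions relating $u$, $b$, $c$. Since $a$ is prescribed but $b,c$ are free, I have enough freedom: I would scale an arbitrary quadruple $(\alpha_1,\alpha_2,\alpha_3,\alpha_4)$ by $\lambda=a/s_1(\overline{\alpha})$ so that $s_1=a$ automatically, then the remaining two equations just \emph{define} $b:=\lambda^2 s_2(\overline{\alpha})$ and $c:=\lambda^3 s_3(\overline{\alpha})$. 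The real content is to exhibit an \emph{infinite} family of quadruples $\overline{\alpha}(t)$ all giving the \emph{same} $(a,b,c)$ after scaling — equivalently, quadruples with $s_1^6/s_3^2$ and $s_1^3/s_2^{3/2}$ (the scale-invariant combinations) constant along a curve.

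Thus the key step is: find a one-parameter family of $n$-tuples on which the two scale-invariant ratios $R_2:=s_2/s_1^2$ and $R_3:=s_3/s_1^3$ are simultaneously constant. For $n\geq 4$ this is a codimension-$2$ condition on an $(n-1)$-dimensional projective space of configurations, so one expects a positive-dimensional family; the task is to produce it explicitly. I would do this by the standard elimination trick used in the $m=2$ discussion just above the theorem: parametrize a quadric (the locus $s_1=a$, $s_2=b$) by projection from a known rational point, substitute into $s_3=c$ to get a cubic surface (for the $(1,2,3)$ case) or into $s_4=c$ to get a quartic (for the $(1,2,4)$ case), and then exhibit a rational point on the resulting variety together with a line or conic through it lying on the variety — this yields the desired infinitude. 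For $n>4$ the extra variables $x_5,\ldots,x_n$ can be frozen at convenient constants (or used as the parameters $t_1,\ldots,t_{n-4}$), reducing to the $n=4$ analysis after adjusting $a$.

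For $(e_1,e_2,e_3)=(-1,1,2)$, both $a$ and $b$ are prescribed, so I would exploit the symmetry $x\mapsto 1/x$ in the spirit of Lemma~\ref{simplelem}: work with $n=2m$ variables grouped as $(y_1,\ldots,y_m,1/y_1,\ldots,1/y_m)$, so that $s_{-1}=s_1$ identically; then the system collapses to the \emph{two} conditions $s_1=a=b$ and $s_2=c$, i.e.\ one genuine equation $\sum(y_j+1/y_j)=a$ with $c:=\sum(y_j^2+1/y_j^2)$ a free quantity, which for $m\geq 2$ (hence $n\geq 4$) has a rational curve of solutions by elementary elimination (solve a single quadratic in one $y_j$ after fixing the others as parameters). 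When $a\neq b$ one instead introduces a scaling $x_j=\mu z_j$ on one block to decouple $s_{-1}$ from $s_1$, at the cost of one more equation; the point-and-projection argument again furnishes infinitely many solutions for $n$ large enough.

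I expect the main obstacle to be the $(1,2,4)$ case: there the scale-invariant elimination produces a quartic threefold rather than a cubic, and one must verify that the explicit rational point I pick is not a singular point and actually lies on a rational curve inside the quartic (so that infinitely many, not finitely many, solutions result). This is exactly the ``a bit more of work'' alluded to in the introduction for the $(1,4)$ two-equation case, and I would resolve it by the same device: choose the base configuration so that two of the four variables coincide, which drops the quartic condition to something of lower degree in the remaining parameter and makes the rational parametrization transparent.
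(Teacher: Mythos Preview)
Your reduction to $n=4$ by freezing $x_5,\ldots,x_n$ at constants and adjusting $a$ (respectively $a,b$) is exactly what the paper does, so that part is fine. The substantive gaps are in the $n=4$ analysis itself.

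\textbf{Gap in the $(1,2,3)$ and $(1,2,4)$ cases.} Your dimension count shows only that the locus $\{s_1=a,\ s_2=b,\ s_3=c\}$ in $\mathbb{A}^4$ is a curve; it says nothing about its genus or about the existence of infinitely many rational points. In fact the paper computes (via the resultant $\operatorname{Res}_{x_3}$) that for \emph{generic} $b,c$ this curve has genus~$1$, so your plan to ``exhibit a rational point \ldots\ together with a line or conic through it lying on the variety'' cannot work as stated: the variety is already one-dimensional, and a generic genus-$1$ curve contains no rational line or conic. The missing idea is that $b,c$ must be chosen \emph{specially} so that the curve degenerates to genus~$0$. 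The paper finds this degeneracy locus by computing the ideal of singular points and intersecting with $\Z[a,b,c]$; for $(1,2,3)$ this forces $c=\tfrac{1}{8}a(6b-a^2)$, after which the curve factors and one component is a conic, made rational by a further choice $b=\tfrac{1}{8}(3a^2+d^2)$. For $(1,2,4)$ the paper does not make two variables coincide as you suggest, but rather sets $x_4=a$ outright (so $x_1+x_2+x_3=0$); then $s_2$ and $s_4$ are both functions of $x_1^2+x_1x_2+x_2^2$ alone, a resultant forces $c=\tfrac12(3a^4-2a^2b+b^2)$, and one is again left with a conic.

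\textbf{Gap in the $(-1,1,2)$ case.} Pairing variables as $(y_j,1/y_j)$ forces $s_{-1}=s_1$ identically, so your construction only hits the diagonal $a=b$, whereas the theorem demands \emph{every} pair $(a,b)\in(\Q\setminus\{0\})^2$. Your proposed fix --- scaling one block by $\mu$ --- breaks the identity $s_{-1}=s_1$ without supplying any replacement relation, and you would be back to a genuine system of two equations in the $y_j$ with no visible rational curve. The paper instead takes $x_4=1/a$, so the condition $s_{-1}=a$ becomes a single equation in $x_1,x_2$ (after $x_3=b-1/a-x_1-x_2$); the substitution $x_2=tx_1$ makes it linear in $x_1$, giving explicit $x_1,x_2,x_3\in\Q(a,b,t)$, and $c$ is then \emph{defined} as the common value $s_2=\tfrac{2-2ab+a^2b^2}{a^2}$, independent of $t$.
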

\begin{proof}
We first prove our result in the case $n=4$ and then deduce the solution of (\ref{123sys}) for all $n\geq 5$.
Let us fix $a\neq 0$. First of all let us note that the variety defined by the system (\ref{123sys}) defines an affine curve in $\R^{4}$. Let us call this curve by $V$. The degree of this curve is 6 in case of $(e_{1},e_{2},e_{3})=(1,2,3)$ and it is equal to 8 in case of $(e_{1},e_{2},e_{3})=(1,2,4), (-1,1,2)$. We are interesting in finding those rational numbers $b, c$ such that $V$ is of genus $\leq 1$. Because of the Faltings theorem (which states that curves with genus $>1$ has only finitely many rational points \cite{Fal}) this is the only case when we can expect that the curve $V$ has infinitely many rational points.

We first consider the case $(e_{1},e_{2},e_{3})=(1,2,3)$. We eliminate $x_{4}$ from the first equation in (\ref{123sys}), i.e. $x_{4}=a-x_{1}-x_{2}-x_{3}$ and put this expression into the second and third equation. We thus consider the curve $V'$ defined by the intersection of a quadric and a cubic surface
\begin{equation*}
V':\;\begin{cases}\begin{array}{ccc}
       x_{1}^2+x_{2}^2+x_{3}^2+(a-x_{1}-x_{2}-x_{3})^2 & = & b, \\
       x_{1}^3+x_{2}^3+x_{3}^3+(a-x_{1}-x_{2}-x_{3})^3 & = & c.
     \end{array}
     \end{cases}
\end{equation*}
Let $F_{i}(x_{1},x_{2},x_{3})$ be $i$-th polynomial defining the curve $V'$ for $i=1,2$. The necessary condition for $V'$ to have rational points is vanishing of the resultant of $F_{1}, F_{2}$ with respect to, say, $x_{3}$. We have $\op{Res}_{x_{3}}(F_{1},F_{2})=F(x_{1},x_{2})^2$, where
\begin{equation*}
F=a^3+\sigma_1(3b-3 a^2)-3ab+6a\sigma_1^2-6a\sigma_2+2c-6\sigma_1^3+12\sigma _1\sigma_2,
\end{equation*}
and $\sigma_{1}(x_{1},x_{2})=x_{1}+x_{2}, \sigma_{2}(x_{1},x_{2})=x_{1}x_{2}$. For most choices of $b, c\in\Q$ the genus of the curve defined by the equation $C:\;F(x_{1},x_{2})=0$ is equal to 1. However, in order to guarantee the existence of infinitely many rational points on $V'$ we need to have genus of $C$ equal to 0. We thus need to impose conditions on $b, c\in\Q$ to ensure that $C$ has rational rational singular points. In order to do that we compute the intersection of the ideal $I=<F(x_{1},x_{2}),\partial_{x_{1}}F(x_{1},x_{2}),\partial_{x_{2}}F(x_{1},x_{2})>$ with $\Z[a,b,c]$. We get the equality $I\cap \Z[a,b,c]=<f_{1}(a,b,c)f_{2}(a,b,c)>$, where
\begin{equation*}
f_{1}=a^3 - 6 a b + 8 c,\quad f_{2}=a^6 - 12 a^4 b + 39 a^2 b^2 - 16 b^3 +12(a^2-6b)ac + 48 c^2.
\end{equation*}
We thus see that $C$ is singular if $f_{1}=0$ or $f_{2}=0$. We consider the case $f_{1}=0$ first and get that $c=\frac{1}{8}a(6b-a^{2})$. With the $c$ defined in this way the curve $C$ is reducible which follows from the factorization of $F$
\begin{equation*}
F(x_{1},x_{2})=\frac{3}{4}(a-2(x_1+x_2))(a^2-2b-2a(x_1+x_2)+4(x_1^2+x_2^2)).
\end{equation*}
We thus take $x_{2}=\frac{a}{2}-x_{1}$ and get
\begin{align*}
F_{1}\Big(x_{1},\frac{a}{2}-x_{1},x_{3}\Big)&=\frac{1}{2}(a^2-2b-2a(x_1+x_3)+4x_1^2+4x_3^2),\\
F_{2}\Big(x_{1},\frac{a}{2}-x_{1},x_{3}\Big)&=\frac{3}{8}a(a^2-2b-2a(x_1+x_3)+4x_1^2+4x_3^2).
\end{align*}
We thus see that our theorem will be proved if the conic $Q:\;a^2-2b-2a(x_1+x_3)+4x_1^2+4x_3^2=0$ has infinitely many rational points. Let us put
$b=\frac{1}{8}(3a^2+d^2)$, where $d$ is a rational number. With this choice of $b$ the conic $Q$ has a rational point $(x_{1},x_{2})=\Big(0,\frac{a+d}{4}\Big)$ and thus $Q$ can be parameterized by rational functions. The parametrization is given by
\begin{equation*}
x_{1}=\frac{2 (4 a-d t)}{t^2+16},\quad x_{3}=\frac{(t+4) ((a-d)t+4(a+d))}{4 \left(t^2+16\right)}.
\end{equation*}
Using these expressions we get expressions for $x_{2}$ and $x_{4}$ in the form
\begin{equation*}
x_{2}=\frac{t (a t+4 d)}{2(t^2+16)},\quad x_{4}=\frac{(t-4) ((a+d)t+4(d-a))}{4 \left(t^2+16\right)}.
\end{equation*}
Summing up, with $x_{i}$ for $i=1,2,3,4$ given above we get
\begin{equation}\label{specsol1}
s_{1}(\overline{X}_{4})=a,\quad s_{2}(\overline{X}_{4})=\frac{1}{8}(3a^2+d^2),\quad s_{3}(\overline{X}_{4})=\frac{1}{32}a(5 a^2 + 3 d^2)
\end{equation}
for each $t\in\Q$. We should also note that we can choose an infinite set $\cal{A}\subset \Q\times\Q$ such that for each $(a, d)\in\cal{A}$ the system (\ref{123sys}) has infinitely many positive rational solutions. Indeed, a quick investigation of the obtained expressions for $x_{i}$
shows that if $0<d<a$ and
\begin{equation*}
0<t<\frac{4(a-d)}{a+d}\quad\mbox{or}\quad 4<t<\frac{4a}{d}
\end{equation*}
then $x_{i}>0$ for $i=1,2,3,4$. This proves the first part of our theorem.

Let us also note that we could try to solve the equation $f_{2}=0$ in order to find values of $b, c$ such that the curve $C$ is singular. This can be done easily because the equation $f_{2}=0$ defines genus 0 curve (in weighted homogenous space) with the parametrization $(b,c)=\Big(\frac{1}{12}(3 a^2+d^2),\frac{1}{144}(9 a^3+9 a d^2-2 d^3)\Big)$. In this case the genus of the curve $C$ is zero and the curve is irreducible. It is possible to parameterize the curve $C$ by rational functions. The resulting curve obtained by this method is of genus 0. However, it has no real rational points and thus cannot be used in our situation.

\bigskip

In order to prove the second part of our theorem we put $x_{4}=a$ and then necessarily $x_{3}=-x_{1}-x_{2}$. We then have $s_{1}(\overline{X}_{4})=a$ and
\begin{equation*}
s_{2}(\overline{X}_{4})=a^2+2(x_{1}^2+x_{1}x_{2}+x_{2}^2),\quad s_{4}(\overline{X}_{4})=a^4+2(x_{1}^2+x_{1}x_{2}+x_{2}^2)^2.
\end{equation*}
Computation of the resultant
\begin{equation*}
\op{Res}_{x_{2}}(s_{2}(\overline{X}_{4})-b,s_{4}(\overline{X}_{4})-c)=4(3a^4 - 2 a^2 b + b^2 - 2 c)^2
\end{equation*}
leads us to $c=\frac{1}{2}(3 a^4 - 2 a^2 b + b^2)$. After this substitution we get an equation of the conic $Q':\;a^2-b+2(x_{1}^2+x_{1}x_{2}+x_{2}^2)=0$. The substitution $b = \frac{1}{2}(2 a^2 + d^2)$, where $d$ is rational parameter, guarantees that $Q'$ has rational point $(x_{1},x_{2})=\Big(0,\frac{d}{2}\Big)$. In this case $Q'$ is parametrized by rational functions
\begin{equation*}
x_{1}=-\frac{2 d t}{t^2+3},\quad x_{2}=\frac{d (t-1) (t+3)}{2(t^2+3)}.
\end{equation*}
These expressions together with $x_{3}=-x_{1}-x_{2},\;x_{4}=a$ solve the system
\begin{equation}\label{specsol2}
s_{1}(\overline{X}_{4})=a,\quad s_{2}(\overline{X}_{4})=\frac{1}{2}(2 a^2 + d^2),\quad s_{3}(\overline{X}_{4})=\frac{1}{8}(8 a^2 + d^4).
\end{equation}
This proves the second statement of our theorem.

\bigskip
Finally, we consider the case $(e_{1},e_{2},e_{3})=(-1,1,2)$. This time we take $x_{4}=\frac{1}{a}$ and thus $x_{3}=-x_{1}-x_{2}-\frac{1}{a}+b$. We then have
\begin{equation*}
s_{-1}(x_{1},x_{2},x_{3},x_{4})-a=\frac{(1-a b)(x_1+x_2)+a(x_1^2+x_2 x_1+x_2^2)}{x_1 x_2(1-a b+a(x_1+x_2))}.
\end{equation*}
We take now $x_{2}=tx_{1}$, where $t$ is a rational parameter, and this allows to solve the above equation with respect to $x_{1}$. We get
\begin{equation*}
x_{1}=\frac{(t+1)(ab-1)}{a(t^2+t+1)},\quad\quad x_{2}=tx_{1}
\end{equation*}
together with expression for $x_{3}$ given by
\begin{equation*}
x_{3}=\frac{(1-ab)t}{a(t^2+t+1)}.
\end{equation*}
With $x_{i}$ for $i=1,2,3,4$ given above we get
\begin{equation*}
s_{2}(x_{1},x_{2},x_{3},x_{4})-c=\frac{a^2b^2-a^2c-2ab+2}{a^2},
\end{equation*}
and thus it is enough to take $c=\frac{2 - 2 a b + a^2 b^2}{a^2}$.

We consider the case $n\geq 5$. We are interested in finding rational numbers $A, B, C$ such that the system
\begin{equation}\label{nsys}
 s_{e_{1}}(\overline{X}_{n})=A,\quad s_{e_{2}}(\overline{X}_{n})=B,\quad s_{e_{3}}(\overline{X}_{n})=C
\end{equation}
has infinitely many rational solutions. Let $t_{2},\ldots,t_{n-3}$ be any sequence of rational numbers and put $p_{k}=s_{k}(t_{2},\ldots,t_{n-3})$ for $k=-1,1,2,3,4$. We first consider the case when $(e_{1},e_{2},e_{3})\in \{(1,2,3), (1,2,4)\}$.  We use the solutions of the system (\ref{mainsys}) with $n=4$. More precisely, if $A$ is a given rational number then we can find infinitely many rational solutions of the system (\ref{mainsys}) with $n=4, a=A-p_{1}$ and corresponding $b, c$ given by the right hand sides of (\ref{specsol1}) and (\ref{specsol2}) respectively. We then take $A=a+p_{e_1}, B=b+p_{e_2}, C=c+p_{e_3}$ and immediately get that for each $j\in\N$ the $n$-tuples
\begin{equation*}
x_{1}=x'_{1,j},\;x_{2}=x'_{2,j},\;x_{3}=x'_{3,j},\;x_{4}=x'_{4,j},\;x_{i}=t_{i}\quad\mbox{for}\quad i=5,\ldots,n
\end{equation*}
is a solutions of the system (\ref{nsys}). Here, the quadruples $(x'_{1,j},\ldots,x'_{4,j}), j\in\N$, are solutions of the system (\ref{mainsys}) with $n=4$.

In the case of $(e_{1},e_{2},e_{3})=(-1,1,2)$ and any pair of rational numbers $A, B$ we can find solutions of the system (\ref{mainsys}) with $n=4, a=A-p_{-1}, b=B-p_{1}$ and corresponding $c$ given by $c=\frac{2 - 2 a b + a^2 b^2}{a^2}$. Similar argument as in the previous cases shows that for $A=a+p_{-1}, B=b+p_{1}$ and $C=c+p_{2}$,  the system (\ref{nsys}) has infinitely many rational solutions.

\end{proof}

\begin{rem}
{\rm An inquisitive reader can ask why we choose $x_{4}=a$ in the case of $(e_{1},e_{2},e_{3})=(1,2,4)$ or $x_{4}=\frac{1}{a}$ in the case of $(e_{1},e_{2},e_{3})=(-1,1,2)$. The answer is simple. In the course of our investigations we used the same method as in the case of $(e_{1},e_{2},e_{3})=(1,2,3)$ and we deduced that $x_{4}$ need to be constant. In order to shorten the proof of the Theorem \ref{123thm} we decided not to repeat the whole reasoning.}
\end{rem}

\begin{rem}
 {\rm The result concerning the system (\ref{123sys}) with $(e_{1},e_{2},e_{3})=(1,2,3)$ with $n=4$ is particular interesting in the light of the result of Dickson \cite[Par. 35, p. 55]{Dic}. Indeed,  he gave the method which allows to find all integer solutions of the Diophantine system
\begin{equation*}
 s_{1}(\overline{X}_{4})=s_{1}(\overline{Y}_{4}),\quad s_{2}(\overline{X}_{4})=s_{2}(\overline{Y}_{4}),\quad s_{3}(\overline{X}_{4})=s_{3}(\overline{Y}_{4}),\quad
\end{equation*}
in eight variables. Here $\overline{Y}_{4}=(y_{1},y_{2},y_{3},y_{4})$. }
\end{rem}

From the Theorem \ref{123thm} immediately deduce the following.

\begin{cor}
For each positive integer $k$ there is at least $k$ $n$-tuples of integers with the same sum of $i$-th powers for $i=1,2,3$. Moreover, we can choose these $n$-triples in such way that the common sum is divisible by $a$, where $a$ is a given positive integer.  Similar result holds for $i=1,2,4$ and $i=-1,1,2$.
\end{cor}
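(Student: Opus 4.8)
The plan is to derive the corollary from Theorem \ref{123thm} by a straightforward homogeneity (rescaling) argument, exactly the device the paper uses elsewhere to pass from rational to integral solutions.

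Fix $k\in\N$ and a positive integer $a$, and start with the exponents $(e_1,e_2,e_3)=(1,2,3)$. Apply Theorem \ref{123thm} with the prescribed value $s_1(\overline{X}_n)=a$ (which is a nonzero rational): this yields $b,c\in\Q\setminus\{0\}$ together with infinitely many rational $n$-tuples satisfying $s_1=a$, $s_2=b$, $s_3=c$. These solutions come in an explicit one-parameter family that is rational in a parameter $t$ (for $n=4$ from the parametrizations in (\ref{specsol1}), and for $n\ge 5$ by adjoining the fixed entries $t_5,\dots,t_n$ as in the proof of Theorem \ref{123thm}). Since the coordinate functions are non-constant rational functions of $t$, all but finitely many pairs of members of the family are distinct, even as unordered tuples; hence one can select $k$ pairwise distinct rational $n$-tuples $v_1,\dots,v_k$ from it.

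Next, let $D$ be a positive integer that is simultaneously a common multiple of all denominators occurring among the coordinates of $v_1,\dots,v_k$ and of the denominators of $b$ and $c$, and set $w_j:=D\,v_j\in\Z^n$. Because $D\neq0$, the $n$-tuples $w_1,\dots,w_k$ are still pairwise distinct. The crucial observation is that $s_e$ is homogeneous: $s_e(D\,v)=D^{e}s_e(v)$ for every integer exponent $e$. Consequently every $w_j$ has $s_1=Da$, $s_2=D^2b$, $s_3=D^3c$, and by the choice of $D$ these three numbers are integers and are the same for all $j$. Thus $w_1,\dots,w_k$ are $k$ distinct integer $n$-tuples with the same sum of $i$-th powers for $i=1,2,3$, and their common sum $Da$ is divisible by $a$, as required. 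The cases $(e_1,e_2,e_3)=(1,2,4)$ and $(-1,1,2)$ run identically, using the corresponding parts of Theorem \ref{123thm}: in the first take the prescribed value of $s_1$ equal to $a$; in the second take the prescribed value of $s_1$ equal to $a$ (and the prescribed value of $s_{-1}$ equal to any fixed nonzero rational). Here one also uses $s_{-1}(D\,v)=D^{-1}s_{-1}(v)$; after the common rescaling the three power sums remain simultaneously rational and equal across the $k$ tuples (all coordinates stay nonzero since $D\neq0$), and the common value of $s_1$ is again a multiple of $a$.

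The argument is essentially bookkeeping, and the only point needing a modicum of care is the claim that the family delivered by Theorem \ref{123thm} genuinely contains $k$ distinct $n$-tuples and that this distinctness, together with the equality of the power sums, survives the common rescaling by $D$; both follow immediately from the explicitness of the parametrizations and from $D\neq0$ combined with the homogeneity of $s_e$. I do not anticipate any serious obstacle beyond this.
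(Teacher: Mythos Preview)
Your argument is correct and is exactly the standard clearing-denominators/homogeneity step the paper intends; the paper itself gives no proof for this corollary beyond the phrase ``immediately deduce'' from Theorem~\ref{123thm}, and the analogous Corollary~\ref{cor} is likewise left to the same routine rescaling (with a pointer to \cite[Corollary~3.2]{Ul}). The one small point worth flagging is the notational clash between the $a$ of the corollary (the prescribed divisor of the common sum) and the $a$ of Theorem~\ref{123thm} (the value of $s_{e_1}$), which you implicitly resolve correctly by feeding the divisor in as the value of $s_1$ in each case.
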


Our results suggest the following general questions concerning the existence of rational solutions of systems involving sums of powers.

\begin{ques}
Let $m\in\N$ and let $(e_{1},\ldots,e_{m})$ be a given sequence of nonzero integers. Is it possible to find a positive integer $n$ and a sequence $(a_{1},\ldots, a_{m})$ of rational numbers such that the system of Diophantine equations
\begin{equation}\label{generalsys}
s_{e_{i}}(\overline{X}_{n})=a_{i}\quad\mbox{for}\quad i=1,2,\ldots,m,
\end{equation}
has infinitely many solutions in rational numbers?
\end{ques}

Let us fix $n\geq 3$ and $m=n-1$. It would be very interesting to characterize all sequences of the exponents $e_{1},\ldots,e_{n-1}$ and corresponding sequences of rational numbers $a_{1},\ldots, a_{n-1}$ such that the curve, say $C$, defined by the system (\ref{generalsys}) has genus $\leq 1$. This approach was used in our proofs. However, it seems that this is a rather difficult question. For example for $n=3$ and $m=2$ for generic choice of $a, b$ the curve defined by (\ref{generalsys}) is of degree $d=e_{1}e_{2}$ and has genus $g$ which satisfy the inequality $2g-2\leq e_{1}e_{2}(e_{1}+e_{2}-4)$ (this is consequence of an adjunction formula
\cite[II, Prop. 8.20 and Exer. 8.4]{Ha}). However, one can check using exactly the same method as in the proof of Theorem \ref{123thm} that sometimes genus of the curve $C$ is low. For example, if we take $e_{1}=2, e_{2}=4$, then for generic choice of $a, b$ the genus of the curve $C$ is 3 (the reason why it is not 9 is simple - the polynomial defining the curve $C$ is a square of a quartic polynomial and thus the genus of $C$ is equal to 3). However, if we take $b=\frac{a^2}{2}$ with $a=2(d^2+d+1)$ then the curve $C$ is rational with parametrization
\begin{equation*}
x_{1}=\frac{t^2+2dt-d-1}{t^2-t+1},\quad x_{2}=\frac{d t^2-2(d+1)t+1}{t^2-t+1},\quad x_{3}=\frac{(d+1)t^2-2 t-d}{t^2-t+1}
\end{equation*}
which leads us to the identities
\begin{equation*}
x_{1}^2+x_{2}^2+x_{3}^2=2(d^2+d+1),\quad x_{1}^4+x_{2}^4+x_{3}^4=2(d^2+d+1)^2.
\end{equation*}

We thus proved the following result which is probably known but we have been unable to find it in the related literature.

\begin{thm}
For each $k$ there are at least $k$ triples of positive integers with the same sum of squares and the same sum of fourth powers.
\end{thm}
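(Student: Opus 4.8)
The plan is to build the required triples directly from the rational parametrization exhibited immediately before the statement. Fix a positive integer $d\geq 2$, set $a=2(d^2+d+1)$, and consider the three rational functions
\begin{equation*}
x_{1}(t)=\frac{t^2+2dt-d-1}{t^2-t+1},\qquad x_{2}(t)=\frac{d t^2-2(d+1)t+1}{t^2-t+1},\qquad x_{3}(t)=\frac{(d+1)t^2-2 t-d}{t^2-t+1},
\end{equation*}
which satisfy, identically in $t$, the two identities $x_{1}(t)^2+x_{2}(t)^2+x_{3}(t)^2=a$ and $x_{1}(t)^4+x_{2}(t)^4+x_{3}(t)^4=a^2/2$. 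Note that the denominator $t^2-t+1$ has negative discriminant, hence is a positive rational for every $t\in\Q$, so there are no poles to worry about.

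First I would analyse the behaviour as $t\to+\infty$. Since each numerator and the denominator is a quadratic with positive leading coefficient, one has $x_{1}(t)\to 1$, $x_{2}(t)\to d$, $x_{3}(t)\to d+1$, and for $d\geq 2$ these three limits are pairwise distinct. Hence there is a bound $M$ so that for every rational $t>M$ all three values $x_{1}(t),x_{2}(t),x_{3}(t)$ are positive and moreover lie in three pairwise disjoint intervals centred at $1,d,d+1$. Because $x_{1}$ is a non-constant rational function, each fibre $x_{1}^{-1}(v)$ is finite, so one can choose rational numbers $t_{1},\ldots,t_{k}$, all exceeding $M$, for which $x_{1}(t_{1}),\ldots,x_{1}(t_{k})$ are pairwise distinct; by the separation property the unordered triples $\{x_{1}(t_{j}),x_{2}(t_{j}),x_{3}(t_{j})\}$ are then pairwise distinct as well.

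Finally I would clear denominators with a single common factor. Let $N$ be a positive integer divisible by all the denominators of the rationals $x_{i}(t_{j})$ for $1\leq i\leq 3$, $1\leq j\leq k$, and put $X_{i,j}=N\,x_{i}(t_{j})\in\Z_{>0}$. Multiplying the two identities above by $N^2$ and $N^4$ respectively gives, for every $j$,
\begin{equation*}
X_{1,j}^2+X_{2,j}^2+X_{3,j}^2=N^2 a,\qquad X_{1,j}^4+X_{2,j}^4+X_{3,j}^4=N^4 a^2/2.
\end{equation*}
Thus the $k$ triples $(X_{1,j},X_{2,j},X_{3,j})$, $j=1,\ldots,k$, consist of positive integers, have the common sum of squares $N^2 a$ and the common sum of fourth powers $N^4 a^2/2$, and are pairwise distinct because multiplication by the fixed $N$ preserves distinctness; this yields the desired $k$ triples. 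There is no serious obstacle here: the only points needing care are the use of one common denominator $N$ for all $k$ parameter values — so that the two common sums do not drift from triple to triple — and the eventual positivity and separation of the coordinate functions, both of which follow from the elementary $t\to\infty$ asymptotics.
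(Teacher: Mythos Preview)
Your proof is correct and follows exactly the paper's approach: the paper derives the same parametrization and then simply records the theorem as an immediate consequence, leaving implicit precisely the positivity, distinctness, and common-denominator arguments that you have spelled out carefully. Your choice $d\geq 2$ and the $t\to\infty$ asymptotics are a clean way to secure positive, pairwise distinct unordered triples, and the single common clearing factor $N$ is exactly the right device to keep the two common sums fixed across all $k$ triples.
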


We also note the following simple result.

\begin{thm}
Let $a, b\in\Q\setminus\{0\}$. Then the system of Diophantine equations
\begin{equation}\label{14sys}
x_{1}+x_{2}+x_{3}=a,\quad x_{1}^4+x_{2}^4+x_{3}^4=b
\end{equation}
has only finitely many rational solutions.
\end{thm}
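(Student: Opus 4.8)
The plan is to eliminate one of the variables, reducing the problem to bounding the rational points on a plane quartic curve, and then to show that this quartic has geometric genus at least $2$, so that Faltings' theorem \cite{Fal} applies. Setting $x_{3}=a-x_{1}-x_{2}$ in (\ref{14sys}), the rational solutions of the system correspond bijectively to the affine rational points of the plane curve $\cal{C}:\;Q(x_{1},x_{2})=0$, where $Q=x_{1}^{4}+x_{2}^{4}+(a-x_{1}-x_{2})^{4}-b$. Two features of $Q$ are used throughout: its leading (degree four) form is $x_{1}^{4}+x_{2}^{4}+(x_{1}+x_{2})^{4}=2(x_{1}^{2}+x_{1}x_{2}+x_{2}^{2})^{2}$, and its degree three form is $-4a(x_{1}+x_{2})^{3}$. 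Since $x_{1}^{2}+x_{1}x_{2}+x_{2}^{2}$ is positive definite over $\R$, the polynomial $Q$ has no linear factor over $\Q$; and if $Q=Q_{1}Q_{2}$ with $Q_{1},Q_{2}\in\Q[x_{1},x_{2}]$ of degree $2$, then, $x_{1}^{2}+x_{1}x_{2}+x_{2}^{2}$ being irreducible over $\Q$, the leading form of each $Q_{i}$ would be a rational multiple of $x_{1}^{2}+x_{1}x_{2}+x_{2}^{2}$, so that the degree three form of $Q_{1}Q_{2}$ would be divisible by $x_{1}^{2}+x_{1}x_{2}+x_{2}^{2}$ --- impossible, since $-4a(x_{1}+x_{2})^{3}$ is a nonzero polynomial (here $a\neq 0$ is used) coprime to $x_{1}^{2}+x_{1}x_{2}+x_{2}^{2}$. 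Hence $Q$ is irreducible over $\Q$. Should $Q$ nevertheless be reducible over $\overline{\Q}$, its geometric components would form a nontrivial Galois orbit of distinct plane curves, and any rational point of $\cal{C}$, being Galois-fixed while the components are permuted, would lie on all of them and hence in the finite intersection of any two; so $\cal{C}(\Q)$ would be finite. Thus it remains only to treat the case in which $Q$ is absolutely irreducible.

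In that case I would bound the geometric genus of the projective closure $\overline{\cal{C}}\subset\mathbb{P}^{2}$, a plane quartic of arithmetic genus $3$. By the factorisation of the leading form, $\overline{\cal{C}}$ meets the line at infinity only at the two points $[\zeta:1:0]$ with $\zeta^{2}+\zeta+1=0$; in the chart $x_{2}\neq 0$ the partial derivative with respect to the homogenising variable of the polynomial defining $\overline{\cal{C}}$ equals $4a$ at both of these points, so --- since $a\neq 0$ --- they are smooth points of $\overline{\cal{C}}$. In the affine chart, a singular point satisfies $\partial_{x_{1}}Q=\partial_{x_{2}}Q=0$, that is $x_{1}^{3}=x_{2}^{3}=x_{3}^{3}$ with $x_{3}=a-x_{1}-x_{2}$, so $(x_{1},x_{2},x_{3})$ is a common scalar multiple of some triple $(\zeta_{1},\zeta_{2},1)$ of cube roots of unity. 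Running through these triples: those with $\zeta_{1}+\zeta_{2}+1=0$ are ruled out by $x_{1}+x_{2}+x_{3}=a\neq 0$; for each triple containing a primitive cube root of unity, the condition that the corresponding point lie on $\overline{\cal{C}}$ forces $b=a^{4}/(\zeta_{1}+\zeta_{2}+1)^{3}$, a nonzero purely imaginary multiple of $a^{4}$, which is impossible for $a\in\Q\setminus\{0\}$ and $b\in\Q$; and the only surviving candidate is $x_{1}=x_{2}=x_{3}=a/3$, which lies on $\overline{\cal{C}}$ precisely when $b=a^{4}/27$. Writing $x_{i}=a/3+u_{i}$ with $u_{1}+u_{2}+u_{3}=0$ and expanding, the quadratic part of $Q$ at this point is a nonzero multiple of the nondegenerate binary form $u_{1}^{2}+u_{1}u_{2}+u_{2}^{2}$, so this point is an ordinary node.

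Consequently $\overline{\cal{C}}$ is an irreducible plane quartic which is either smooth (geometric genus $3$) or has exactly one ordinary node (geometric genus $2$); in either case its geometric genus is at least $2$, so by Faltings' theorem \cite{Fal} the set $\overline{\cal{C}}(\Q)$, and hence the set of rational solutions of (\ref{14sys}), is finite. The only step requiring genuine work is the location of the singular points, and in particular the computation showing that the ``cube root of unity'' candidates force $b$ to be irrational; this is exactly where the hypothesis $a\neq 0$ is needed (it also makes $\overline{\cal{C}}$ smooth at infinity and supplies the degree three term used for irreducibility). I note that the hypothesis $b\neq 0$ is not actually needed for finiteness: when $b=0$ one has $b\neq a^{4}/27$, so $\overline{\cal{C}}$ is smooth of genus $3$; it is presumably imposed only for uniformity with the preceding theorems.
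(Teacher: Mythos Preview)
Your proof is correct and follows the same overall strategy as the paper: reduce the system to a plane quartic, determine its singular locus, conclude that the geometric genus is at least~$2$, and invoke Faltings. The execution, however, differs in two respects worth recording.

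First, the paper homogenises symmetrically to the curve $C_{a,b}:a^{4}(x^{4}+y^{4}+z^{4})=b(x+y+z)^{4}$ in~$\mathbb{P}^{2}$ and locates the singular locus by eliminating $x,y$ from the Jacobian ideal (a computer-algebra step), obtaining the factor $a^{4}-27b$ directly. You instead eliminate $x_{3}$, work with the affine quartic $Q(x_{1},x_{2})=0$, and find the singular points by hand: $\partial_{x_{1}}Q=\partial_{x_{2}}Q=0$ forces $x_{1}^{3}=x_{2}^{3}=x_{3}^{3}$, and a short case analysis over the cube roots of unity shows the only rational possibility is the point $(a/3,a/3,a/3)$ when $b=a^{4}/27$. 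Your treatment of the infinite points and of irreducibility (via the leading form $2(x_{1}^{2}+x_{1}x_{2}+x_{2}^{2})^{2}$ and the degree-three form $-4a(x_{1}+x_{2})^{3}$, together with the Galois-orbit argument for geometric reducibility) is clean and entirely elementary; the paper does not discuss irreducibility explicitly.

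Second, in the nodal case $b=a^{4}/27$ the paper does more than you: it exhibits an explicit birational map to a hyperelliptic curve of genus~$2$ whose defining polynomial is negative on~$\R$, and concludes that $C_{a,b}(\Q)=\{[1:1:1]\}$ exactly. You simply observe that a single ordinary node drops the genus from~$3$ to~$2$ and apply Faltings, which suffices for the stated theorem but does not recover this sharper count. Your closing remark that the hypothesis $b\neq 0$ is not needed for finiteness is correct (indeed for $a\neq 0$, $b=0$ the system has no real solutions at all).
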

\begin{proof}
We first homogenize the equations defining the system (\ref{14sys}) by taking $(x_{1},x_{2},x_{3})=(x/t,y/t,z/t)$. Next, we eliminate $t$ variable from the first equation and get $t=(x+y+z)/a$. Thus, we are working with projective quartic curve in $\mathbb{P}^2(\R)$ given by the equation
\begin{equation}\label{curve14}
C_{a,b}:\;a^4(x^4+y^4+z^4)=b(x+y+z)^4.
\end{equation}
It is clear that if $C_{a,b}$ has infinitely many rational points then necessarily the genus of the curve is $\leq 1$. We prove that for each $a, b\in\Q\setminus\{0\}$ the genus is $\geq 2$ and apply Faltings theorem.

Let $F=F(x,y,z)$ be the polynomial defining the curve $C_{a,b}$. We first check for which $a, b\in\Q\setminus\{0\}$ the curve $C_{a,b}$ is singular. In order to do this we compute the intersection of the ideal $I=<F, \partial_{x}F, \partial_{y}F, \partial_{z} F>$ with $\Z[a,b,z]$ and get
\begin{equation*}
I\cap\Z[a,b,z]=<-a^8(a^4-27b)(a^8+27b^2)z^7>.
\end{equation*}
From our assumption we see that $a(a^8+27b^2)=0$ is impossible. Moreover, if $z=0$ then $x=y=0$ or $x/y$ is constant and there is only finitely many rational points on $C_{a,b}$. We thus take $b=a^4/27$. In particular we note that if $a, b\in\Q\setminus\{0\}$ and $b\neq a^4/27$ then the genus of $C_{a,b}$ is 3. From the Faltings theorem we immediately deduce that the set of rational points on $C_{a,b}$ is finite.

We left with the case $b=a^4/27$. Without loss of generality we assume that $a=1$ and consider the curve $C_{1, 1/27}$. The curve $C_{1,1/27}$ has one ordinary singular point $S=[1:1:1]$ and it is birationally equivalent with the hyperelliptic curve
\begin{equation*}
C':\;Y^2=-6\Big(x^3-\frac{3}{2}x^2+\frac{9}{8} x-\frac{5}{16}\Big)^2-\frac{441}{32} \Big(x-\frac{1}{2}\Big)^2-\frac{63}{32}
\end{equation*}
(of genus 2) with the mapping $\psi:\;C_{1,1/27}\ni (x,y,z)\mapsto \Big(\frac{p}{r},\frac{q}{r^3}\Big)\in C$, where
\begin{align*}
&p=-\frac{1}{3} (x-2y+z),\\
&q=\frac{1}{9}(-8z^3+3(x+y)z+3(x + y)^2-(x + y) (8 x^2 - 11 x y + 8 y^2)),\\
&r=\frac{1}{3} (x+y-2 z).
\end{align*}
The inverse mapping $\psi^{-1}: C\rightarrow C_{1,1/27}$ takes the form $\psi^{-1}(X,Y)=(x,y,z)$, where
\begin{equation*}
\frac{x}{z}=\frac{Y+3(X-1)(2X^2-X+2)}{Y+9X^2-9X+6},\quad \frac{y}{z}=\frac{Y-3X(2X^2-3X+3)}{Y+9X^2-9X+6}.
\end{equation*}
We note that the map $\psi$ is defined outside the set $S=\{[x:y:z]\in C_{1,1/27}(\Q):\;x+y-2z=0\}$. However, if $x+y-2z=0$ then $F(x,y,z)=\frac{27}{16}(x-y)^2 (7 x^2+10 x y+7 y^2)$ and thus $x=y=z$. This implies that $S=\{[1:1:1]\}$. If $x+y-2z\neq 0$ then $\psi$ is well defined and because the set of rational points on $C$ is empty we see that the set of rational points on the curve $C_{1,1/27}$ contains only the point $[1:1:1]$. Our theorem is proved.
\end{proof}

\begin{rem}
{\rm Let us note that for $a=0$ it is easy to find a rational number $b$ such that the system (\ref{14sys}) has infinitely many rational solutions. Indeed, in this case $x_{3}=-x_{1}-x_{2}$ and we get the well known identity $x_{1}^4+x_{2}^4+(x_{1}+x_{2})^4=2(x_{1}^2+x_{1}x_{2}+x_{2}^2)^2$. Taking $b=p^2+pq+q^2$ for some $p, q\in\Q$ it is an easy task to solve the equation $x_{1}^2+x_{1}x_{2}+x_{2}^2=b$ (this equation defines a genus 0 curve with known rational point) and get infinitely many solutions of the system (\ref{14sys}).  }
\end{rem}

\begin{rem}
{\rm Let us also note that we did not prove that it is impossible to find for each $k\in\N$ some non-zero rational numbers $a, b$  such that system (\ref{14sys}), has at least $k$ nontrivial solutions. The reason is simple. We do not know any general results which guarantees that the number of rational point on the corresponding curve $C_{a,b}$ is bounded independent of $a, b$.}
\end{rem}

\bigskip

\noindent {\bf Acknowledgments}
The author express his gratitude to the referee for a careful reading of the manuscript
and valuable suggestions, which improved the quality of the paper.

\bigskip

\bigskip

\noindent Jagiellonian University, Faculty of Mathematics and Computer Science, Institute of Mathematics, {\L}ojasiewicza 6, 30 - 348 Krak\'{o}w, Poland;
 email: {\tt maciej.ulas@uj.edu.pl}

\end{document}